\newtheorem{theorem}{Theorem}
\newtheorem{lemma}[theorem]{Lemma}
\newtheorem{cor}[theorem]{Corollary}
\DeclareMathOperator\lcm{lcm}
\title[Lower Bounds  for Least Common Multiples of Arithmetic Progressions]{Further Improvements of Lower Bounds
\\for the Least Common Multiples
\\of Arithmetic Progressions}
\author{Shaofang Hong}
\address{Mathematical College, Sichuan University\newline\indent
    Chengdu 610064\newline\indent People's Republic of China}
\email{s-f.hong@tom.com, hongsf02@yahoo.com, sfhong@scu.edu.cn}
\author[Scott Duke Kominers]{Scott Duke Kominers$^*$}
\address{Departments of Mathematics and Economics, Harvard University\newline\indent c/o 8520
  Burning Tree Road\newline \indent Bethesda, MD 20817, USA}
\email{kominers@fas.harvard.edu, skominers@gmail.com}
\urladdr{http://www.scottkom.com/}
\thanks{$^*$Corresponding author.}
\thanks{The first author was partly supported by the Program for New Century Excellent Talents in
University, Grant No.~NCET-06-0785.  The second author was partly supported by a National Science Foundation Graduate Research Fellowship.}
\subjclass[2000]{11A05 (primary)}
\keywords{Least common multiple, arithmetic progression}
\begin{document}
\begin{abstract}For relatively prime positive integers $u_0$ and $r$, we consider the
arithmetic progression $\{u_k:=u_0+kr\}_{k=0}^n$.

Define $L_n:=\lcm\{u_0, u_1, \ldots, u_n\}$ and let $a\ge 2$ be
any integer. In this paper, we show that, for integers $\alpha ,
r\geq a$ and $n\geq 2\alpha r$, we have $$L_n\geq u_0r^{\alpha
+a-2}(r+1)^n.$$ In particular, letting $a=2$ yields an improvement
to the best previous lower bound on $L_n$ (obtained by Hong and
Yang) for all but three choices of $\alpha , r\geq 2$.
\end{abstract}
\maketitle
\section{Introduction}\label{intro}

The search for effective estimates on the least common multiples of
finite arithmetic progressions began with the work of
Hanson~\cite{Ha} and Nair~\cite{Na}, who respectively found upper
and lower bounds for $\lcm\{1,\ldots, n\}$.

Inspired by this work, Bateman, Kalb, and Stenger~\cite{BKS} and
Farhi~\cite{BF1} respectively sought asymptotics and nontrivial
lower bounds for the least common multiples of general arithmetic
progressions.  Farhi~\cite{BF1} obtained several nontrivial bounds
and posed a conjecture which was later confirmed by Hong and
Feng~\cite{HF}.  Additionally, Hong and Feng~\cite{HF} obtained an
improved lower bound for sufficiently large arithmetic progressions;
this result was recently sharpened further by Hong and
Yang~\cite{HY}.  Hong and Yang~\cite{HY2} and Farhi and
Kane~\cite{FK} also obtained some related results regarding the
least common multiple of a finite number of consecutive integers.
The theorem of Farhi and Kane~\cite{FK} was very recently extended
to general arithmetic progressions by Hong and Qian~\cite{HQ}.

In this article, we study finite arithmetic progressions
$\{u_k:=u_0+kr\}_{k=0}^n$ with $u_0,r\geq 1$ integers satisfying
$(u_0,r)=1$.  Throughout, we define
$$L_n:=\lcm\{u_0,u_1,\ldots, u_n\}$$
to be the least common multiple of the sequence $\{u_k\}_{k=0}^n$.
The following lower bound on $L_n$ was found by Hong and
Yang~\cite{HY}.
\begin{theorem}[\cite{HY}]\label{hythm}
Let $\alpha\geq 1$ be an integer.  If $n>  r^\alpha$, then we have
$L_n\geq u_0 r^\alpha(r+1)^n$.
\end{theorem}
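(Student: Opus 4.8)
The plan is to bound $L_n$ from below by the least common multiple of a well-chosen tail $\{u_k,\dots,u_n\}$ of the progression, extract the factor $(r+1)^n$ from an elementary product estimate on that tail, and then recover the extra coefficient $r^\alpha$ from the primes dividing $r$. The starting point is the arithmetic fact underlying everything: since $(u_0,r)=1$ we have $(u_i,r)=1$ for all $i$, so $\gcd(u_i,u_j)=\gcd(u_i,(j-i)r)=\gcd(u_i,j-i)$ divides $j-i$. Writing $L_{n,k}:=\lcm\{u_k,\dots,u_n\}$ and letting $v_p(\cdot)$ denote the $p$-adic valuation, I would prove the key divisibility
\[
\frac{u_k u_{k+1}\cdots u_n}{L_{n,k}}\ \Big|\ (n-k)!\qquad(0\le k\le n)
\]
by a prime-by-prime count: for each prime $p$ and each $m$, the indices $j\in[k,n]$ with $p^m\mid u_j$ lie in a single residue class modulo $p^m$ (again because $p\nmid r$), so there are at most $\lfloor(n-k)/p^m\rfloor+1$ of them; summing over $m$ shows $v_p$ of the left-hand quotient is at most $v_p((n-k)!)$. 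Since $L_n\ge L_{n,k}$, this gives the workhorse inequality $L_n\ge \dfrac{u_k\cdots u_n}{(n-k)!}$ for every $k$.

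The decisive refinement is that the same count gives far more at the primes $p\mid r$: because $u_j\equiv u_0\pmod p$ and $(u_0,r)=1$, no $u_j$ is divisible by such a $p$, so the quotient above is coprime to $r$ and therefore divides the largest divisor of $(n-k)!$ that is coprime to $r$. Hence
\[
L_n\ \ge\ \frac{u_k\cdots u_n}{(n-k)!}\cdot\prod_{p\mid r}p^{\,v_p((n-k)!)} .
\]
The product over $p\mid r$ is the engine that will produce the coefficient $r^\alpha$, and it is here — and only here — that the specific hypothesis $n>r^\alpha$ enters.

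It then remains to estimate. I would take $k$ close to $\frac{n}{r+1}$, the value maximizing $\binom{n}{k}r^{\,n-k}$, and use $u_{k+i}\ge(k+i)r$ to get $\frac{u_k\cdots u_n}{(n-k)!}\ge u_k\binom{n}{k}r^{\,n-k}$; since $\sum_{k}\binom{n}{k}r^{n-k}=(r+1)^n$, this already delivers the exponential factor $(r+1)^n$ together with an explicit linear factor, with room to spare. The role of $n>r^\alpha$ is to make the chosen window long enough that $\prod_{p\mid r}p^{v_p((n-k)!)}\ge r^\alpha$: for the governing (smallest) prime $p\mid r$ this is $v_p((n-k)!)\ge\alpha$, which holds once $n-k$ exceeds an appropriate multiple of $r^\alpha$. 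Combining the two displayed bounds and discarding the slack then yields $L_n\ge u_0 r^\alpha(r+1)^n$.

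The main obstacle is precisely this last step. The exponential order $(r+1)^n$ is robust and falls out of the tail-product estimate almost for free; the real work is to pin down the \emph{exact} coefficient $u_0 r^\alpha$ under the \emph{exact} hypothesis $n>r^\alpha$. Concretely, one must select an integer $k$ in range that simultaneously keeps the window product large, retains an honest factor $u_0$, and forces $v_p((n-k)!)\ge\alpha$, while controlling the remaining subexponential factors — a bookkeeping problem that is most delicate for composite $r$, where several primes dividing $r$ contribute to the refinement at once.
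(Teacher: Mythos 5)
Your skeleton coincides with the machinery this paper assembles around Theorem~\ref{hythm} (which the paper itself only quotes from \cite{HY}): your divisibility $\frac{u_k\cdots u_n}{L_{n,k}}\mid (n-k)!$ is exactly Lemma~\ref{L1} in the form \eqref{keq}, and your prime-by-prime count is a legitimate alternative proof of it; your coprimality-to-$r$ refinement is exactly the step $r^{\beta}\mid A_{n,k}$ reproduced in the proof of Theorem~\ref{mainthm}; your window near $k\approx n/(r+1)$ is the paper's $k_n$. The problem is that the two steps you defer as ``bookkeeping'' are the entire content of the theorem, and where you do commit to specifics, the claims fail. First, the assertion that $u_k\binom{n}{k}r^{n-k}\ge u_0(r+1)^n$ holds ``with room to spare'' near $k=n/(r+1)$ is false in general: the largest term of $\sum_k\binom{n}{k}r^{n-k}=(r+1)^n$ is only of order $(r+1)^n/\sqrt{n}$, so at that $k$ your bound is roughly $\bigl(u_0+\tfrac{nr}{r+1}\bigr)(r+1)^n/\sqrt{n}$, which is far below $u_0(r+1)^n$ once $u_0$ is large compared with $\sqrt{n}$ (e.g.\ $u_0\approx n$). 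Recovering the honest factor $u_0$ for \emph{all} $u_0$ forces the $u_0$-dependent cutoff $k_n=\max\{0,\lfloor(n-u_0)/(r+1)\rfloor+1\}$ and the full argument behind Lemma~\ref{L2}, a genuine lemma of \cite{HF} that is not ``almost for free.''

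Second, your mechanism for the factor $r^\alpha$ is flawed on two counts. The ``governing smallest prime'' criterion is wrong for composite $r$: with $r=12$, $\alpha=2$, $n-k=4$, one has $v_2(4!)=3\ge\alpha$, yet $\prod_{p\mid 12}p^{v_p(4!)}=2^3\cdot 3=24<144=r^\alpha$; what is needed is $v_p((n-k)!)\ge\alpha\,v_p(r)$ for \emph{every} $p\mid r$, for which the clean sufficient condition --- the one the paper's endgame actually uses --- is $n-k\ge\alpha r$, since then the distinct factors $r,2r,\dots,\alpha r$ of $(n-k)!$ give $r^\alpha\mid(n-k)!$ with no per-prime analysis. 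More seriously, you never verify that the hypothesis $n>r^\alpha$ makes the chosen window that long, and it does not always: for $u_0=1$, $r=\alpha=2$, $n=5>r^\alpha$, one gets $k_n=2$, $n-k_n=3<\alpha r$, and $v_2(3!)=1$, so your displayed inequality delivers only a factor $r^1$, i.e.\ $u_0r(r+1)^n=486$ rather than the required $972$. Section~\ref{sec4} shows the bound genuinely fails at $n=r^\alpha$ ($L_4=315<324$ for $u_0=1$, $r=\alpha=2$), so there is no slack to absorb such losses; closing exactly these boundary configurations is the key lemma of \cite{HY}, the counterpart of Lemma~\ref{lemma2}, which this paper proves only under the stronger linear hypothesis $n\ge 2\alpha r$. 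Your proposal correctly locates the difficulty but does not overcome it.
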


\noindent If $r=1$, then the content of Theorem~\ref{hythm} is the
conjecture of Farhi~\cite{BF1} proven by Hong and Feng~\cite{HF}. If
$\alpha =1$, then Theorem~\ref{hythm} becomes the improved lower
bound of Hong and Feng~\cite{HF}.

In this paper, we sharpen the lower bound in Theorem~\ref{hythm}
whenever $\alpha , r\ge 2$. In particular, we prove the following
theorem which replaces the exponential condition $n>r^{\alpha }$ of
Theorem~\ref{hythm} with a linear condition, $n\ge 2\alpha r$.

\begin{theorem}\label{mainthm}
Let $a\ge 2$ be any given integer. Then for any integers $\alpha ,
r\ge a$ and $n\ge 2\alpha r$, we have $L_n\ge u_0r^{\alpha
+a-2}(r+1)^n$.
\end{theorem}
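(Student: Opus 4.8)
The plan is to reduce the theorem to a single divisibility lemma, sharpened so that it alone produces the extra power of $r$, and then to finish by an elementary balancing estimate. The crucial structural fact is that $L_n$ is coprime to $r$: since $(u_0,r)=1$ and $u_i\equiv u_0\pmod r$, every $u_i$ is coprime to $r$, hence so is their least common multiple. I will exploit this, together with a partial-fraction identity, to prove the following \emph{Key Lemma}: for every integer $k$ with $0\le k\le n$, writing $e_k$ for the largest integer with $r^{e_k}\mid k!$, one has
$$L_n\ \ge\ \frac{r^{e_k}}{k!}\,u_{n-k}u_{n-k+1}\cdots u_n .$$
The point is that the factor $r^{e_k}$ is exactly what the standard ``integer-combination'' bound $L_n\ge (u_{n-k}\cdots u_n)/(r^k k!)$ discards, and it is precisely the reservoir from which the factor $r^{\alpha+a-2}$ will be drawn.

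To prove the Key Lemma I would begin from the partial-fraction expansion
$$\frac{1}{u_{n-k}\cdots u_n}=\frac{1}{r^k k!}\sum_{i=0}^{k}(-1)^{k-i}\binom{k}{i}\frac{1}{u_{n-i}},$$
which is valid because $u_{n-t}-u_{n-i}=(i-t)r$. Multiplying through by $L_n$ and using $u_{n-i}\mid L_n$ shows that $M:=\dfrac{r^{k}k!\,L_n}{u_{n-k}\cdots u_n}$ is a positive integer. I would then compute the exact power of $r$ dividing $M$: for each prime $p\mid r$, the coprimality of the $u_{n-j}$ and of $L_n$ to $r$ gives $v_p(M)=k\,v_p(r)+v_p(k!)$, so the largest power of $r$ dividing $M$ is $k+\min_{p\mid r}\lfloor v_p(k!)/v_p(r)\rfloor=k+e_k$. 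Hence $M\ge r^{k+e_k}$, and rearranging $M=\dfrac{r^{k}k!\,L_n}{u_{n-k}\cdots u_n}$ yields the displayed inequality.

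With the Lemma available I would split on the size of $n$. If $n>r^{\alpha+a-2}$, then since $\alpha+a-2\ge 1$ I may apply Theorem~\ref{hythm} with parameter $\alpha+a-2$ in place of $\alpha$, obtaining $L_n\ge u_0 r^{\alpha+a-2}(r+1)^n$ immediately; in particular this already settles every $n\ge 2\alpha r$ whenever $2\alpha r>r^{\alpha+a-2}$, so the Lemma is only needed in the band $2\alpha r\le n\le r^{\alpha+a-2}$. In that band I would invoke the Key Lemma with $k$ near its maximizing value, concretely $k=n-\lceil n/(r+1)\rceil\approx \tfrac{r}{r+1}n$. For this $k$ one has the clean estimate $e_k\ge\lfloor k/r\rfloor$ (each of the $\lfloor k/r\rfloor$ multiples of $r$ in $\{1,\dots,k\}$ contributes a distinct factor of $r$ to $k!$), while a standard estimate of $\tfrac{1}{k!}u_{n-k}\cdots u_n$ (via Stirling's formula or a telescoping comparison, peeling off one factor $u_{n-k}\ge u_0$ to supply the leading $u_0$) shows this product is of order $u_0(r+1)^n$. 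Multiplying by $r^{e_k}$ then furnishes the remaining factor $r^{\alpha+a-2}$.

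The main obstacle is exactly this last balancing step. On its own the product $\tfrac{1}{k!}u_{n-k}\cdots u_n$ only reaches size $\asymp (r+1)^n$, so the full factor $r^{\alpha+a-2}$ must be extracted from $r^{e_k}$ while the product part is simultaneously pushed to near its maximum; the two contributions have to be controlled in tandem rather than separately. Here $n\ge 2\alpha r$ gives $e_k\gtrsim \tfrac{n}{r+1}\ge \tfrac{n}{2r}\ge\alpha$, with enough slack to absorb the lower-order term $a-2$, whereas the band restriction $n\le r^{\alpha+a-2}$, i.e. $\log_r n\le \alpha+a-2$, is what bounds the rounding/digit loss in the estimate $e_k\ge\lfloor k/r\rfloor$ and keeps the implied constant in the product bound under control. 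The balance is tightest when $\alpha$ is close to $a$, or when $r$ is close to $a$, and I expect a handful of such boundary configurations to require separate verification — which is presumably the origin of the exceptional cases flagged in the abstract.
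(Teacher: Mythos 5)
Your Key Lemma is correct, and its proof (the partial-fraction identity giving Farhi's integrality statement, plus the observation that every $u_i$ and hence $L_n$ is coprime to $r$, so the full power $r^{e_k}$ dividing $k!$ can be harvested) is exactly the mechanism of the paper's proof: in the paper's notation your inequality with $k=n-k'$ reads $L_n\ge L_{n,k'}\ge r^{e}\,C_{n,k'}$ where $r^{e}\mid (n-k')!$. The paper then finishes in two short steps, and it is precisely these two steps that your proposal leaves as gaps. First, the product bound: you need the \emph{exact} inequality $\frac{1}{k!}u_{n-k}\cdots u_n\ge u_0(r+1)^n$, not ``of order $u_0(r+1)^n$'' --- an asymptotic statement with an implied constant cannot produce the constant-free inequality $L_n\ge u_0r^{\alpha+a-2}(r+1)^n$. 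This exact bound is the Hong--Feng result (Lemma~\ref{L2}), valid for the specific cut point $k_n=\lfloor\frac{n-u_0}{r+1}\rfloor+1$; you choose a nearby but different $k$ and do not prove the bound for it. Second, the valuation count: you need $e_k\ge\alpha+a-2$, but your estimate $e_k\gtrsim n/(2r)\ge\alpha$ only reaches $\alpha$, and ``enough slack to absorb $a-2$'' is asserted rather than shown. The required inequality is $\lfloor(n-k_n)/r\rfloor\ge\alpha+a-2$, which reduces (using $n\ge 2\alpha r$) to $(2\alpha r-1)-(r+1)(\alpha+a-2)>0$, i.e.\ $\alpha(r-1)-(a-2)(r+1)>1$; this holds because $\alpha\ge a$ \emph{and} $r\ge a$ give $2(r-a)+1>0$ (the paper's Lemma~\ref{lemma2}). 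Your sketch never invokes the hypothesis $r\ge a$, without which this step fails, so the arithmetic is not merely routine bookkeeping.

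Two further points. The case split on $n>r^{\alpha+a-2}$ versus the ``band'' $2\alpha r\le n\le r^{\alpha+a-2}$ is unnecessary --- the argument above works uniformly for all $n\ge 2\alpha r$ --- and importing Theorem~\ref{hythm} adds nothing. More importantly, your closing expectation that ``a handful of boundary configurations require separate verification,'' which you tie to the exceptional cases in the abstract, rests on a misreading: Theorem~\ref{mainthm} has no exceptional cases. The ``three choices of $\alpha,r\ge 2$'' in the abstract concern only the comparison of hypotheses between the new theorem ($n\ge 2\alpha r$) and the old one ($n>r^\alpha$), not the validity of the bound itself. A proof that leaves boundary configurations unverified would therefore not prove the stated theorem.
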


Letting $a=2$, we see that Theorem~\ref{mainthm} improves upon
Theorem~\ref{hythm} for all but three choices of $\alpha , r\ge 2$.

The remainder of this paper is organized as follows. In Section
\ref{sec2}, we introduce relevant notation and previous results.  In
Section~\ref{sec3}, we prove Theorem~\ref{mainthm} and as a
corollary obtain arbitrarily strong sharpening of
Theorem~\ref{hythm} which apply in all but finitely many cases.
Then, in Section~\ref{sec4}, we discuss when the condition
$n>r^\alpha$ is necessary in Theorem~\ref{hythm}.

\section{Notation and Previous Results}\label{sec2}  For any real numbers~$x$ and~$y$,
we say that~$y$ \emph{divides}\/~$x$ if there exists an integer~$z$
such that $x=y\cdot z$. If $x$ divides $y$, then we write $y\mid x$.
As usual, we let $\lfloor x \rfloor$ denote the largest integer no
more than $x$.

Following Hong and Yang~\cite{HY}, we denote, for each integer
$0\leq k\leq n$,
$$C_{n,k}:=\frac{u_k\cdots u_n}{(n-k)!},\quad L_{n,k}:=\lcm\{u_k,\ldots,u_n\}.$$
From the latter definition, we have that $L_n=L_{n,0}$.

The following Lemma first appeared in~\cite{BF1} and was reproven in
several sources:
\begin{lemma}[\cite{BF1}, \cite{BF2}, \cite{HF}]\label{L1}
For any integer $n\geq 1$, $C_{n,0}\mid L_n$.
\end{lemma}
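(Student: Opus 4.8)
The statement to establish is that $L_n/C_{n,0}$ is a positive integer, i.e.\ that $L_n$ is an integer multiple of $C_{n,0}=\prod_{k=0}^n u_k/n!$ in the generalized sense defined above; note that $C_{n,0}$ itself need not be an integer (take $u_0=1,r=2$, where the numerator is odd but $n!$ is even), so this generality is genuinely needed. The plan is to check the divisibility one prime at a time. Writing $v_p(\cdot)$ for the $p$-adic valuation, it suffices to prove $v_p(C_{n,0})\le v_p(L_n)$ for every prime $p$, since then every exponent in $L_n/C_{n,0}=\prod_p p^{\,v_p(L_n)-v_p(C_{n,0})}$ is nonnegative and the quotient is a positive integer. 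Here $v_p(C_{n,0})=\sum_{k=0}^n v_p(u_k)-v_p(n!)$, while $L_n=\lcm\{u_0,\dots,u_n\}$ gives $v_p(L_n)=\max_{0\le k\le n}v_p(u_k)$. Thus the whole lemma reduces to the single inequality $\sum_{k=0}^n v_p(u_k)\le v_p(n!)+\max_{0\le k\le n}v_p(u_k)$.

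For primes $p$ dividing $r$ this is immediate. Since $(u_0,r)=1$ and $u_k\equiv u_0\pmod r$, each $u_k$ is coprime to $p$, so $\sum_k v_p(u_k)=0$ and in fact $v_p(C_{n,0})=-v_p(n!)\le 0\le v_p(L_n)$. This is precisely the place where the possible non-integrality of $C_{n,0}$ lives, and the generalized notion of divisibility absorbs it harmlessly.

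The substantive case is $p\nmid r$, and this is where I expect the real work. First I would count, for each $j\ge 1$, the indices $k\in\{0,\dots,n\}$ with $p^{\,j}\mid u_k$: because $p\nmid r$, $r$ is invertible modulo $p^{\,j}$, so $u_0+kr\equiv 0\pmod{p^{\,j}}$ forces $k$ into a single residue class modulo $p^{\,j}$, giving at most $\lfloor n/p^{\,j}\rfloor+1$ such indices. The main obstacle — and the reason the estimate must be run carefully — is that the naive count contributes a spurious $+1$ at every level $j$, so that summed over all $j$ it would diverge; the fix is to observe that, setting $e:=\max_k v_p(u_k)=v_p(L_n)$, no $u_k$ is divisible by $p^{\,j}$ once $j>e$, so the sum truncates at level $e$ and contributes exactly $e$ extra, which is precisely the $\max_k v_p(u_k)$ term on the right. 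Concretely, $\sum_{k=0}^n v_p(u_k)=\sum_{j=1}^{e}\#\{k:p^{\,j}\mid u_k\}\le\sum_{j=1}^{e}(\lfloor n/p^{\,j}\rfloor+1)\le v_p(n!)+e$, where the first term is bounded by Legendre's formula $\sum_{j\ge 1}\lfloor n/p^{\,j}\rfloor=v_p(n!)$ and the second is the collected $e$ ones. Hence $v_p(C_{n,0})\le v_p(L_n)$, completing the prime-by-prime verification.

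As an alternative I would note the slicker route through the partial-fraction identity $\sum_{k=0}^n(-1)^k\binom nk/u_k=r^n/C_{n,0}$: multiplying by $L_n$ and using that each $L_n/u_k$ is an integer shows that $\prod_{k=0}^n u_k$ divides $r^n\,n!\,L_n$, after which the coprimality $(\prod_k u_k,\,r)=1$ cancels the factor $r^n$ to yield that $\prod_k u_k$ divides $n!\,L_n$, which is the claim. That argument trades the valuation bookkeeping for the separate tasks of proving the identity and justifying the cancellation; the valuation count above seems to me the most transparent and self-contained.
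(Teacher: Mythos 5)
Your argument is correct and complete, and it is worth noting that the paper itself supplies no proof of Lemma~\ref{L1}, citing it from Farhi and from Hong and Feng; the proofs in those sources proceed either by induction on $n$ via gcd manipulations or through exactly the partial-fraction identity $\sum_{k=0}^n(-1)^k\binom{n}{k}/u_k=n!\,r^n/(u_0\cdots u_n)$ that you sketch as your alternative (multiply by $L_n$ to get $u_0\cdots u_n\mid n!\,r^n L_n$, then cancel $r^n$ using $(u_k,r)=1$). Your main argument---comparing $v_p(C_{n,0})$ with $v_p(L_n)$ prime by prime---is therefore a genuinely different, self-contained route, and both of its cases are handled correctly: for $p\mid r$ the hypothesis $(u_0,r)=1$ forces $v_p(u_k)=0$ for every $k$, which is precisely where the possible non-integrality of $C_{n,0}$ is absorbed by the paper's generalized notion of divisibility; for $p\nmid r$ the invertibility of $r$ modulo $p^j$ confines the solutions of $u_0+kr\equiv 0\pmod{p^j}$ to a single residue class, giving the count $\lfloor n/p^j\rfloor+1$, and truncating the sum over $j$ at $e=\max_k v_p(u_k)$ before applying Legendre's formula correctly disposes of the accumulated $+1$'s, yielding $\sum_k v_p(u_k)\le v_p(n!)+v_p(L_n)$. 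The valuation count buys transparency about where each hypothesis enters and avoids having to verify the identity; the identity-based proof is shorter and yields the intermediate divisibility $u_0\cdots u_n\mid n!\,r^n L_n$ without any coprimality assumption, deferring the use of $(u_0,r)=1$ to a single cancellation step. Either proof is acceptable here.
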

From Lemma \ref{L1}, we see immediately that
\begin{equation}
\label{keq}L_{n,k}=A_{n,k}\frac{u_k\cdots u_n}{(n-k)!}=A_{n,k}\cdot
C_{n,k}
\end{equation}
for an integer $A_{n,k}\geq 1$.

Following Hong and Feng~\cite{HF} and Hong and Yang~\cite{HY}, we
define, for any $n\geq 1$,
\begin{equation}
k_n:=\max\left\{0,\left\lfloor
\frac{n-u_0}{r+1}\right\rfloor+1\right\}.\label{hat}
\end{equation}
Hong and Feng~\cite{HF}  proved the following result.
\begin{lemma}[\cite{HF}]\label{L2}
For all $n\geq 1$ and $0\leq k\leq n$,
$$L_n\geq L_{n,k_n}\geq C_{n,k_n}\geq  u_0(r+1)^n.$$
\end{lemma}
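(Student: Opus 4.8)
The statement is a chain of three inequalities, and my plan is to dispatch the two outer links by bookkeeping and concentrate all the work on the inner estimate $C_{n,k_n}\ge u_0(r+1)^n$. For the first link, $L_n\ge L_{n,k_n}$, I would simply note that since $k_n\ge 0$ the set $\{u_{k_n},\ldots,u_n\}$ is contained in $\{u_0,\ldots,u_n\}$, so $L_{n,k_n}$ divides $L_n$ and therefore cannot exceed it. For the second link, $L_{n,k_n}\ge C_{n,k_n}$, I would invoke \eqref{keq}: writing $L_{n,k_n}=A_{n,k_n}C_{n,k_n}$ with $A_{n,k_n}\ge 1$ an integer (which is exactly Lemma~\ref{L1} applied to the sub-progression $u_{k_n},\ldots,u_n$, whose first term is again coprime to $r$) gives the bound at once.

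The heart of the argument is the inequality $C_{n,k_n}\ge u_0(r+1)^n$, which I would prove by induction on $n$. The inductive skeleton rests on a single reduction: it suffices to show $C_{n,k_n}\ge(r+1)\,C_{n-1,k_{n-1}}$, for then the hypothesis $C_{n-1,k_{n-1}}\ge u_0(r+1)^{n-1}$ upgrades to $C_{n,k_n}\ge u_0(r+1)^n$. The base case $n=1$ I would check by hand, separating $u_0=1$ (where $k_1=1$ and $C_{1,1}=u_1=u_0(r+1)$) from $u_0\ge 2$ (where $k_1=0$ and $C_{1,0}=u_0u_1\ge u_0(r+1)$). The key structural fact driving the induction is that, by \eqref{hat}, increasing $n$ by one raises $k_n$ by either $0$ or $1$, and the jump occurs precisely when $(r+1)\mid(n-u_0)$; this dichotomy organizes the whole step.

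The two cases are then short ratio computations. When $k_n=k_{n-1}=:k$, the products telescope to $C_{n,k}/C_{n-1,k}=u_n/(n-k)$, and since the defining property of $k_n$ forces $u_k\ge n-k$, I obtain $u_n=u_k+(n-k)r\ge(n-k)(r+1)$, which is exactly what is needed. When $k_n=k_{n-1}+1=:k$, the ratio instead telescopes to $C_{n,k}/C_{n-1,k-1}=u_n/u_{k-1}$; here the divisibility condition $(r+1)\mid(n-u_0)$ that triggers the jump yields $n=u_0+(k-1)(r+1)$, whence a direct substitution gives the clean coincidence $u_n=(r+1)u_{k-1}$, so the ratio equals $r+1$ on the nose. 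The main obstacle I anticipate is precisely this second case: one must see that the shift in the cutoff index $k_n$ is synchronized with the arithmetic of the progression so that the telescoped ratio lands exactly at $r+1$ rather than merely above it. Checking that the max-with-$0$ clamp in \eqref{hat} does not disrupt this synchronization at the first few values of $n$ is the only delicate bookkeeping, and it is absorbed into the base case.
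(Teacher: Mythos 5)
Your chain is correct, and it is worth noting that the paper itself does not prove Lemma~\ref{L2} at all---it is quoted from Hong and Feng~\cite{HF}---so your proposal is a reconstruction of the cited argument rather than an alternative to anything in this paper; in fact your induction (split by whether $k_n$ jumps, with the telescoped ratios $u_n/(n-k)\ge r+1$ via $u_{k}\ge n-k$ in the stationary case and $u_n=(r+1)u_{k-1}$ exactly in the jump case) is essentially the Hong--Feng proof. One small imprecision deserves correction, though it does not damage the argument: the jump in $k_n$ is \emph{not} equivalent to $(r+1)\mid(n-u_0)$. When $n<u_0$ the clamp in \eqref{hat} gives $k_n=k_{n-1}=0$ even if $(r+1)\mid(n-u_0)$ (e.g.\ $u_0=10$, $r=2$, $n=4$), so the equivalence holds only for $n\ge u_0$; conversely, a jump does force $n\ge u_0$ (since $k_n\ge 1$) and hence $n=u_0+(k-1)(r+1)$, which is the direction you actually use. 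Likewise, the clamp is not ``absorbed into the base case'': it is active for \emph{all} $n<u_0$, and it is your stationary case that silently handles it---there the needed inequality $u_{k}\ge n-k$ with $k=0$ reads $u_0\ge n$, which is precisely the condition under which the clamp engages. Since your case split is by the jump/no-jump dichotomy rather than by divisibility, both cases close and the proof stands as written.
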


\section{Proof of the Main Theorem and Corollary}\label{sec3}
We begin with a lemma which is similar to a key step of the proof of
Theorem~\ref{hythm}. The proof of this result closely follows the
approach of Hong and Yang~\cite{HY}, but simplifies the analysis.

\begin{lemma}\label{lemma2}
Let $a\ge 2$ be any given integer. Then for any integers $\alpha ,
r\ge a$ and $n\ge 2\alpha r$, we have $n-k_n>(\alpha +a-2)r$.
\end{lemma}
\begin{proof}
If $n\le u_0$, then by the definition (2) we have $k_n\le 1$. Since
$\alpha , r\ge a\ge 2$ and $n\ge 2\alpha r$, we deduce that
$n-k_n\ge n-1\ge 2\alpha r-1>(\alpha +a-2)r$.

Now, we suppose that $n>u_0$. In this case, we have
$$k_n=\left\lfloor \frac{n-u_0}{r+1} \right\rfloor+1;$$ it follows
that
$$
k_n\le \frac{n-u_0}{r+1}+1\le \frac{n-1}{r+1}+1=\frac{n+r}{r+1}.
$$
From this, we then see that
\begin{equation}\label{part1}
n-k_n\ge n-\frac{n+r}{r+1}=\frac{(n-1)r}{r+1}\ge \frac{(2\alpha
r-1)r}{r+1}.
\end{equation}

However, the assumption $\alpha , r\ge a$ implies that
\begin{align}
(2\alpha r-1)-(r+1)(\alpha +a-2)\nonumber&=(r-1)\alpha -1-(r+1)(a-2)
\\&\ge a(r-1)-1-(r+1)(a-2)\label{part2}\\\nonumber&=2(r-a)+1>0.
\end{align}
Therefore from \eqref{part2}, we infer that
\begin{equation}\label{part3}
\frac{2\alpha r-1}{r+1}>\alpha +a-2.
\end{equation}
The desired result then follows immediately from \eqref{part1} and
\eqref{part3}.
\end{proof}

From Lemma~\ref{lemma2}, the proof of Theorem~\ref{mainthm} follows
directly, via the same argument as in the endgame of the proof of
Theorem~\ref{hythm}. For completeness, we reproduce this elegant
argument here.

\begin{proof}[Proof of Theorem~\ref{mainthm}]
By hypothesis, we have $\alpha , r\ge a\ge 2$ and $n\ge 2\alpha r$.
As a consequence of Lemma \ref{lemma2}, we therefore obtain that
$r^{\alpha +a-2} \mid (n-k_n)!$.  Thus, we may express $(n-k_n)!$ in
the form $r^{\alpha +a-2} \cdot B_n=(n-k_n)!$, with $B_n\geq 1$ an
integer. If we choose $k=k_n$ in \eqref{keq}, we find that
$$r^{\alpha +a-2} \cdot B_n\cdot L_{n,k_n}=A_{n,k_n}\cdot u_{k_n}\cdots u_n.$$
It then follows that $r^{\alpha +a-2} \mid A_{n,k_n}$, since the
requirement $(r,u_0)=1$ implies that $(r,u_k)=1$ for all $0\leq
k\leq n$.  Then, we obtain from \eqref{keq} and Lemma \ref{L2} that
$$L_{n,k_n}\geq r^{\alpha +a-2} C_{n,k_n}\geq
u_0r^{\alpha}(r+1)^n;$$ Theorem~\ref{mainthm} follows.
\end{proof}

As a corollary of Theorem~\ref{mainthm}, we obtain a substantial
sharpening of Theorem~\ref{hythm}.
\begin{cor}\label{theCor}
Fix integers $a\ge 2$ and $\beta \ge 1$. Then, for all but finitely
many choices of integers $\alpha , r\ge a$, we have that $L_n\ge
u_0r^{\alpha +\beta +a-2}(r+1)^n$ whenever $n>r^{\alpha }$.
\end{cor}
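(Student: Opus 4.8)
The plan is to derive the corollary from Theorem~\ref{mainthm} by raising the parameter $\alpha$. Since $\beta\ge 1$ and $\alpha\ge a$, the integer $\alpha':=\alpha+\beta$ satisfies $\alpha'>\alpha\ge a$, so the pair $(\alpha',r)$ meets the hypotheses $\alpha',r\ge a$ of Theorem~\ref{mainthm}. Applying that theorem with $\alpha'$ in place of $\alpha$ yields $L_n\ge u_0r^{\alpha'+a-2}(r+1)^n=u_0r^{\alpha+\beta+a-2}(r+1)^n$, which is exactly the bound we want, provided the size condition $n\ge 2\alpha'r=2(\alpha+\beta)r$ holds. Thus the entire task reduces to comparing the two hypotheses: the corollary assumes $n>r^{\alpha}$, whereas this application of Theorem~\ref{mainthm} requires $n\ge 2(\alpha+\beta)r$.

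Next I would observe that the corollary's hypothesis $n>r^\alpha$ is the stronger of the two for all but finitely many $(\alpha,r)$. Since $r^\alpha$ is an integer, the implication ``$n>r^\alpha\Rightarrow n\ge 2(\alpha+\beta)r$'' holds as soon as $r^\alpha\ge 2(\alpha+\beta)r$, that is, as soon as $r^{\alpha-1}\ge 2(\alpha+\beta)$. So it suffices to prove that the exceptional set $S:=\{(\alpha,r):\alpha,r\ge a,\ r^{\alpha-1}<2(\alpha+\beta)\}$ is finite; for every pair outside $S$ the chain $n>r^\alpha\ge 2(\alpha+\beta)r$ supplies the missing size condition.

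The finiteness of $S$ is the main (and essentially only) obstacle: it is a two-variable growth comparison between the power $r^{\alpha-1}$ and the linear quantity $2(\alpha+\beta)$. I would dispatch it by bounding the two parameters in turn. First, because $r\ge a\ge 2$, any $(\alpha,r)\in S$ satisfies $2^{\alpha-1}\le r^{\alpha-1}<2(\alpha+\beta)$, hence $2^{\alpha-2}<\alpha+\beta$; as the left side grows exponentially while the right side grows only linearly, this pins $\alpha$ below some explicit threshold $\alpha_0=\alpha_0(\beta)$. Then, for each of the finitely many remaining values $\alpha\in\{a,\dots,\alpha_0\}$, the exponent $\alpha-1\ge 1$ together with $r^{\alpha-1}<2(\alpha+\beta)\le 2(\alpha_0+\beta)$ forces $r<2(\alpha_0+\beta)$, so only finitely many $r$ survive for each such $\alpha$. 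Hence $S$ is finite and the corollary holds for every pair $(\alpha,r)$ outside $S$. I expect the only subtlety to be stating the exponential-versus-linear comparison cleanly enough to yield an honest (even if unoptimized) bound $\alpha_0$; everything else is a direct substitution into Theorem~\ref{mainthm}.
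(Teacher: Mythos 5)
Your proposal is correct and follows essentially the same route as the paper: apply Theorem~\ref{mainthm} with the inflated parameter $\alpha+\beta$ and observe that $n>r^{\alpha}$ implies the required linear condition on $n$ for all but finitely many pairs $(\alpha,r)$. The only difference is cosmetic---you use the slightly sharper threshold $n\ge 2(\alpha+\beta)r$ where the paper uses $n\ge 2(\alpha+\beta+a-2)r$, and you spell out the exponential-versus-linear finiteness argument that the paper merely asserts.
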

\begin{proof}
By Theorem~\ref{mainthm}, we have $L_n\ge u_0r^{\alpha +\beta
+a-2}(r+1)^n$ whenever $n\ge 2(\alpha +\beta +a-2)r$. If $r^{\alpha
}+1\ge 2(\alpha +\beta +a-2)r$, then the condition $n>r^{\alpha }$
guarantees that $n\ge 2(\alpha +\beta +a-2)r$. Since, for any given
integer $\beta \ge 1$, we have $r^{\alpha }+1\ge 2(\alpha +\beta
+a-2)r$ for all but finitely many choices of $\alpha , r\ge a$, the
result follows immediately.
\end{proof}

The bound of Corollary~\ref{theCor} becomes effective even for
small~$\alpha $ and~$r$. For example, the choices of $a=2$ and
$\beta =1$ in Corollary~\ref{theCor} sharpen  Theorem~\ref{hythm} by
a factor of $r$ for all but six choices of $\alpha ,r\ge 2$.

\section{Examples with $L_n<u_0r^{\alpha }(r+1)^n$}\label{sec4}
In their article, Hong and Yang~\cite{HY} asserted that their
condition $n>r^\alpha$ is actually necessary for the bound
$L_n>u_0r^\alpha(r+1)^n$ in Theorem \ref{hythm}. This assertion was
accompanied by an example,
\begin{equation}
u_0=r=2,\quad \alpha=3,\quad n=8,\label{exam}
\end{equation}
in which $L_n=5040< 104976=u_0r^\alpha(r+1)^n$ (see Remark 3.1
of~\cite{HY}). This example \eqref{exam} not only satisfies
$r^\alpha=8\not<8=n$, but also satisfies $2\alpha r=12\not\leq 8=n$.
Unfortunately, \eqref{exam} does not satisfy the condition
$(u_0,r)=1$, so it does not actually suffice to demonstrate the
necessity of the condition $n>r^\alpha$ in Theorem \ref{hythm} when
$r=2$ and $\alpha=3$.

As $2\alpha r<r^{\alpha }+1$ for all but three choices of $\alpha ,
r\ge 2$, examples with $L_n<u_0r^\alpha(r+1)^n$ and $n=r^\alpha$ are
available for at most three choices of $\alpha,r\geq 2$. A computer
search of all $u_0< n=r^\alpha$ with $(u_0,r)=1$ in these three
cases\footnote{We need only consider the cases with $u_0< n$, as the
proof of Lemma~\ref{lemma2} shows that $\alpha r<n-k_n$ \emph{a
priori}---and so the result of Theorem~\ref{mainthm}
holds---whenever $u_0\geq n$.} indicates that there exists only one
example with  $L_n<u_0r^\alpha(r+1)^n$, $(u_0,r)=1$, and
$r^\alpha=n$:
\begin{gather*}u_0=1,\quad r=\alpha=2, \quad n=4,
\end{gather*}
 in which $L_{4}=\lcm\{1,3,5,7,9\}=315<324=1\cdot 2^2(2+1)^4$.

\section*{Acknowledgements}
The authors greatly appreciate the helpful comments and suggestions
of Andrea~J.\ Hawksley, Professor Noam~D.\ Elkies, Daniel M.\ Kane, Shrenik
N.\ Shah, and the editor, Professor Wen-Ching Winnie Li.

\bibliographystyle{amsalpha}
\bibliography{lcm_bound_bib}

\end{document}